\newtheorem{theorem}{Theorem}
\newtheorem{proposition}{Proposition}
\newtheorem{corollary}{Corollary}
\title[Universality of Automorphisms]{Universality of Automorphisms on the Ball of Bounded Holomorphic Functions on the Polydisk}
\author{Timothy G. Clos}
\address[Timothy G. Clos]{Bowling Green State University, 
	Department of Mathematics and Statistics,  Bowling Green, Ohio 43403 }
\email{clost@bgsu.edu}
\date{\today}
\begin{document}
 
\maketitle

\begin{abstract}
    Given a sequence of automorphisms of the polydisk, we show that the associated composition semigroup homomorphisms on the ball of bounded holomorphic functions on the polydisk admit a universal inner function if a certain condition on the automorphisms are satisfied.
\end{abstract}

\section{Introduction}
Let $X$ be a separable, metrizable topological space.  A sequence of continuous maps $\{T_n:X\rightarrow X| n\geq 1\}$ is said to be universal if there exists $x\in X$ so that $\{T_n x:n\geq 1\}$ is dense in $X$.  Such an $x$ is called a universal element of $\{T_n\}$.  Let $\Omega\subset \mathbb{C}^n$ be a domain.  We let $H(\Omega)$ represent the set of all holomorphic functions on $\Omega$.  When equipped with the compact open topology, $H(\Omega)$ is a Frechet space.  Recall that a sequence $\{f_j\}\subset H(\Omega)$ converges to $f$ in the compact open topology if and only if $f_j\rightarrow f$ uniformly on compact subsets of $\Omega$ as $j\rightarrow \infty$.  We define \[\mathbb{D}^n:=\{(z_1,...,z_n)\in \mathbb{C}^n:|z_j|<1\,,j=1,2,...,n\}\] to be the unit polydisk and let \[\mathbb{T}^n:=\{(z_1,...,z_n)\in \mathbb{C}^n:|z_j|=1\,,j=1,...,n\}\] be the distinguished boundary.  Then $H^{\infty}(\mathbb{D}^n)$ denotes the space of bounded holomorphic functions on $\mathbb{D}^n$ equipped with the compact open topology.  Then we define the ball of $H^{\infty}(\mathbb{D}^n)$ to be 
\[\overline{\textit{Ball}}(H^{\infty}(\mathbb{D}^n)):=\{h\in H^{\infty}(\mathbb{D}^n):\|h\|_{L^{\infty}(\mathbb{D}^n)}\leq 1\}.\] 
We will show the non-Euclidean analog of Seidel-Walsh theorem (see \cite{GethShap} and \cite{SeidWal})  for the polydisk in $\mathbb{C}^n$ and then give necessary and sufficient conditions for a sequence of automorphisms of the disk and the polydisk to have a universal element on the ball of $H^{\infty}(\mathbb{D}^n)$. 
Since $\overline{\textit{Ball}}(H^{\infty}(\mathbb{D}^n))$ is a semigroup and not a vector space, one cannot use vector space techniques.  Instead, we show that the semigroup universality criterion in \cite{ChanWalm} is satisfied for certain sequences of composition operators (actually semigroup homomorphisms) on $\overline{\textit{Ball}}(H^{\infty}(\mathbb{D}^n))$.  On the unit ball in $\mathbb{C}^n$, \cite{Aron} studies whether a sequence of automorphisms on the unit ball in $\mathbb{C}^n$ admits a universal element.  On the polydisk, the main result in \cite{Chee} shows there exists a sequence of automorphisms of the polydisk with a universal element, but does not consider arbitrary sequences of automorphisms.  We give a natural condition on a sequence of automorphisms of the polydisk that ensures the associated composition semigroup homomorphisms admit a universal element.  The following theorem classifies automorphisms of the unit polydisk.  Let $S_n$ be the symmetry group of $n$ elements.  Recall the following theorem appearing in \cite{Rud}.

\begin{theorem}\cite{Rud}\label{thmrud}
Let $\phi$ be an automorphism of the polydisk $\mathbb{D}^n\subset \mathbb{C}^n$.  Then there exists $(\alpha_1,...,\alpha_n)\in \mathbb{D}^n$, $(\theta_1,...,\theta_n)\in \mathbb{R}^n$, and $p\in S_n$ so that
\[\phi(z_1,z_2,...,z_n)=\left(e^{i\theta_1}\frac{\alpha_1-z_{p(1)}}{1-\overline{\alpha_1}z_{p(1)}},...,e^{i\theta_n}\frac{\alpha_n-z_{p(n)}}{1-\overline{\alpha_n}z_{p(n)}}\right).\]
\end{theorem}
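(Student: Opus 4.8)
The plan is to prove the classification in three stages: reduce to automorphisms fixing the origin, show that every such automorphism is linear, and finally identify the linear automorphisms of $\mathbb{D}^n$. For the reduction, note that for $\alpha=(\alpha_1,\dots,\alpha_n)\in\mathbb{D}^n$ the map
\[
\psi_\alpha(z)=\left(\frac{\alpha_1-z_1}{1-\overline{\alpha_1}z_1},\dots,\frac{\alpha_n-z_n}{1-\overline{\alpha_n}z_n}\right)
\]
is an automorphism of $\mathbb{D}^n$, being a product of disk automorphisms, and it is an involution with $\psi_\alpha(\alpha)=0$. Given an arbitrary automorphism $\phi$, I would set $\alpha=\phi^{-1}(0)$ and $F=\phi\circ\psi_\alpha$, so that $F\in\mathrm{Aut}(\mathbb{D}^n)$ and $F(0)=0$. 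Once $F$ is shown to have the form $F(z)=(e^{i\theta_1}z_{p(1)},\dots,e^{i\theta_n}z_{p(n)})$, then, since $\psi_\alpha^{-1}=\psi_\alpha$, we recover $\phi=F\circ\psi_\alpha$, and composing the components out yields exactly the asserted formula, the $\alpha_j$ of the statement being the coordinates of $\phi^{-1}(0)$ reindexed by $p$.

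For the linearity step I would invoke the Cartan uniqueness theorem: a holomorphic self-map $G$ of a bounded domain with $G(0)=0$ and $G'(0)=\mathrm{id}$ is the identity. This is proved by writing $G(z)=z+P_m(z)+\cdots$ with $P_m$ the first nonzero homogeneous term of degree $m\ge 2$, checking by induction that the $k$-th iterate of $G$ equals $z+kP_m(z)+\cdots$, and then using the Cauchy estimates on a slightly smaller polydisk to bound the degree-$m$ Taylor coefficients uniformly in $k$, which forces $P_m=0$. Next comes the circular-domain trick: writing $R_\theta(z)=e^{i\theta}z$, which is an automorphism of $\mathbb{D}^n$ since the polydisk is circular, the automorphism $G_\theta=R_{-\theta}\circ F^{-1}\circ R_\theta\circ F$ fixes $0$, and the chain rule gives $G_\theta'(0)=(e^{-i\theta}I)(A^{-1})(e^{i\theta}I)(A)=\mathrm{id}$, where $A=F'(0)$. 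By Cartan uniqueness $G_\theta=\mathrm{id}$, i.e.\ $R_\theta\circ F=F\circ R_\theta$, so $F(e^{i\theta}z)=e^{i\theta}F(z)$ for all $\theta$ and $z$. Expanding $F=\sum_{m\ge 1}P_m$ into homogeneous parts and comparing Fourier modes in $\theta$ forces $P_m=0$ for $m\ne 1$; hence $F=A$ is linear, and $A$ is invertible since $F$ is a biholomorphism.

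Finally, $A$ is a linear bijection of $\mathbb{C}^n$ with $A(\mathbb{D}^n)=\mathbb{D}^n$, hence by continuity a surjective linear isometry of $(\mathbb{C}^n,\|\cdot\|_\infty)$, whose closed unit ball is $\overline{\mathbb{D}^n}$. Such an isometry permutes the extreme points of the closed unit ball, which are exactly the points of $\mathbb{T}^n$; thus $|(Az)_j|=1$ for every $z\in\mathbb{T}^n$ and every $j$. Fixing $j$ and all coordinates of $z$ but one, the resulting function $\zeta\mapsto a_{jk}\zeta+c$ has constant modulus on $\mathbb{T}$, which is possible only if $a_{jk}=0$ or $c=0$; running this over all $k$ shows row $j$ of $A$ has a single nonzero entry, of modulus $1$. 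Invertibility makes $j\mapsto p(j)$ a permutation, and writing $a_{j,p(j)}=e^{i\theta_j}$ gives $F(z)=(e^{i\theta_1}z_{p(1)},\dots,e^{i\theta_n}z_{p(n)})$; reassembling with $\psi_\alpha$ as above finishes the proof. I expect the linearity step to be the main obstacle: the fact that origin-fixing automorphisms are linear is where the real content lies, and the iteration-plus-Cauchy-estimates proof of Cartan's uniqueness theorem is the one genuinely clever ingredient, while the reduction and the $\ell^\infty$-isometry analysis are routine.
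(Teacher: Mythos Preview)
The paper does not supply its own proof of this theorem; it is quoted from Rudin's book \cite{Rud} as a known classification and is used only as background. Your argument is correct and is essentially the classical proof one finds in Rudin: reduce to the origin-fixing case by precomposing with a coordinatewise M\"obius involution, apply Cartan's uniqueness theorem together with the circular symmetry $z\mapsto e^{i\theta}z$ to force linearity, and then identify the linear automorphisms of $\mathbb{D}^n$ as signed permutation matrices via the $\ell^\infty$-isometry/extreme-point analysis. There is nothing to compare against in the present paper, but your write-up matches the standard route and contains no gaps.
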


\section{Inner functions on the Polydisk}

Recall that on the disk in $\mathbb{C}$, an inner function is any bounded holomorphic function on the disk $g:\mathbb{D}\rightarrow \mathbb{C}$ so that \[\lim_{r\rightarrow 1^-}|f(re^{i\theta})|=1\] almost everywhere.  We can extend this definition to the polydisk in a natural way.  That is, we say a bounded holomorphic function on the polydisk $f:\mathbb{D}^n\rightarrow \mathbb{C}$ is an inner function if 
\[\lim_{r\rightarrow 1^-}|f(re^{i\theta_1},re^{i\theta_2},...,re^{i\theta_n})|=1\] almost everywhere.  We say an inner function $g:\mathbb{D}^n\rightarrow \mathbb{C}$ is a good inner function (see \cite{Chee}) if 
\[\int_{b\mathbb{D}^n}\log|g(r\zeta)|d\sigma(\zeta)\rightarrow 0\] as $r\rightarrow 1^-$.

The following proposition shows that if $f$ is a good inner function then for any automorphism of the polydisk, $\phi$, $f\circ \phi$ is also a good inner function.  That is, the action of the composition operator $C_{\phi}$ preserves good inner functions.

\begin{proposition}\label{prop1}
Let $g:\mathbb{D}^n\rightarrow \mathbb{C}$ be a good inner function and $\phi:\mathbb{D}^n\rightarrow \mathbb{D}^n$ be an automorphism of the polydisk.  Then, $g\circ \phi$ is a good inner function.  
\end{proposition}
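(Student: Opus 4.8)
The plan is to reduce, via Theorem~\ref{thmrud}, to the case in which $\phi$ acts by a single M\"obius map in a single coordinate, and then to run a one--variable potential-theoretic estimate and iterate over the coordinates. Inner-ness of $g\circ\phi$ comes for free once the good-integral condition is in hand: since $g\circ\phi\in H^{\infty}(\mathbb{D}^n)$ with $\|g\circ\phi\|_{L^{\infty}(\mathbb{D}^n)}\le 1$, it has radial boundary values $\sigma$--almost everywhere on $\mathbb{T}^n$, so if one shows that $\int_{b\mathbb{D}^n}\log|g(\phi(r\zeta))|\,d\sigma(\zeta)\to 0$ as $r\to 1^-$ (call this assertion $(\star)$), then Fatou's lemma applied to the nonnegative functions $-\log|g(\phi(r\zeta))|$ forces $|g(\phi(r\zeta))|\to 1$ $\sigma$--a.e., so that $g\circ\phi$ is inner, and $(\star)$ then says exactly that it is good. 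Thus the proposition is equivalent to $(\star)$. Writing $\phi=R\circ M\circ P$ as permitted by Theorem~\ref{thmrud}, with $(Rw)_j=e^{i\theta_j}w_j$, $(Pz)_j=z_{p(j)}$ and $M(z)=(m_{\alpha_1}(z_1),\dots,m_{\alpha_n}(z_n))$, $m_\alpha(w)=\frac{\alpha-w}{1-\overline{\alpha}w}$: since $R$ and $P$ preserve $d\sigma$ and commute with the dilation $z\mapsto rz$, precomposition by either leaves the functional $h\mapsto\int_{b\mathbb{D}^n}\log|h(r\zeta)|\,d\sigma(\zeta)$---hence $(\star)$---unchanged, while $M$ is the composition of the maps $M^{(j)}$ that apply $m_{\alpha_j}$ only in the $j$th variable. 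So it suffices to prove that if $h$ is a good inner function then so is $h\circ M^{(1)}$, where $M^{(1)}(z)=(m_\alpha(z_1),z_2,\dots,z_n)$, and then to iterate.

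For the reduced statement, set $\zeta'=(\zeta_2,\dots,\zeta_n)$ and $H_{r,\zeta'}(w):=h(w,r\zeta_2,\dots,r\zeta_n)$, a holomorphic function on $\mathbb{D}$ with $|H_{r,\zeta'}|\le 1$. By Fubini, $\int_{b\mathbb{D}^n}\log|h(m_\alpha(r\zeta_1),r\zeta_2,\dots,r\zeta_n)|\,d\sigma(\zeta)=\int_{\mathbb{T}^{n-1}}\big[\frac1{2\pi}\int_0^{2\pi}\log|H_{r,\zeta'}(m_\alpha(re^{it}))|\,dt\big]\,d\sigma(\zeta')$. Because $m_\alpha$ maps $0$ to $\alpha$ and $\{|w|<r\}$ onto the pseudohyperbolic disc $D_{\mathrm{ph}}(\alpha,r)$, conformal invariance of harmonic measure identifies the bracketed quantity with the integral of the subharmonic function $\log|H_{r,\zeta'}|$ over $\partial D_{\mathrm{ph}}(\alpha,r)$ against harmonic measure at $\alpha$. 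For $r$ close to $1$ one has $\alpha\in D(0,R_r)\subset D_{\mathrm{ph}}(\alpha,r)$ with $R_r:=\operatorname{dist}(0,\partial D_{\mathrm{ph}}(\alpha,r))\in(|\alpha|,1)$ and $R_r\to 1$; using that harmonic-measure averages of subharmonic functions increase with the domain, then bounding the Poisson kernel $P_{\alpha/R_r}$ of $\mathbb{D}$ by its maximum $M_r:=\frac{R_r+|\alpha|}{R_r-|\alpha|}$ while using $\log|H_{r,\zeta'}|\le 0$, one obtains
\[
\frac1{2\pi}\int_0^{2\pi}\log\big|H_{r,\zeta'}(m_\alpha(re^{it}))\big|\,dt\ \ge\ \frac1{2\pi}\int_0^{2\pi}\log\big|H_{r,\zeta'}(R_r e^{i\theta})\big|\,P_{\alpha/R_r}(e^{i\theta})\,d\theta\ \ge\ M_r\cdot\frac1{2\pi}\int_0^{2\pi}\log\big|H_{r,\zeta'}(R_r e^{i\theta})\big|\,d\theta .
\]
Integrating in $\zeta'$ then yields
\[
\int_{b\mathbb{D}^n}\log|h(m_\alpha(r\zeta_1),r\zeta_2,\dots,r\zeta_n)|\,d\sigma(\zeta)\ \ge\ M_r\int_{b\mathbb{D}^n}\log|h(R_r\zeta_1,r\zeta_2,\dots,r\zeta_n)|\,d\sigma(\zeta).
\]
The function $(\rho_1,\dots,\rho_n)\mapsto\int_{b\mathbb{D}^n}\log|h(\rho_1\zeta_1,\dots,\rho_n\zeta_n)|\,d\sigma(\zeta)$ is nonpositive, nondecreasing in each variable (subharmonicity of $\log|h|$ in each slot), and tends to $0$ along the diagonal---which is exactly the hypothesis that $h$ is good. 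Since $R_r\to 1$ and $M_r$ stays bounded near $r=1$, a squeeze forces the right-hand side to $0$; as the left-hand side is $\le 0$, it too tends to $0$. That is $(\star)$ for $M^{(1)}$; iterating over the coordinates, then undoing $R$ and $P$, proves the proposition.

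The step I expect to require the most care is the one--variable estimate. The naive option---replacing the inner average by the value of $\log|H_{r,\zeta'}|$ at the center $\alpha$---discards the dependence on $r$ and gives a bound too crude to combine with the hypothesis; one must instead compare against a concentric circle $\{|w|=R_r\}$ whose radius still tends to $1$, at the cost of controlling the Poisson weight $P_{\alpha/R_r}$, and the monotonicity of harmonic-measure averages of subharmonic functions under domain inclusion is what makes this work. One should also dispose of the (measure-zero) set of $\zeta'$ with $H_{r,\zeta'}\equiv 0$, where the inner average is $-\infty$ but the outer integral is unaffected.
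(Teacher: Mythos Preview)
Your argument is correct, and it takes a genuinely different route from the paper's.

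The paper treats all coordinates simultaneously: it shows that $\phi(r\mathbb{T}^n)$ is a product of off-center circles $\prod_j \mathbb{T}_{a_j(r),r_j(r)}$ with $a_j(r)\to 0$ and $r_j(r)\to 1$, performs a global change of variables with a Jacobian bound, and then compares $\int\log|g|$ over this product of circles with $\int\log|g(R\zeta)|$ via a triangle-inequality splitting, appealing to continuity and dominated convergence to control the error terms. Inner-ness of $g\circ\phi$ is obtained separately, by citing a result of Sawyer on compositions with inner symbols. Your approach instead factors $\phi$ through Theorem~\ref{thmrud}, disposes of the rotation and permutation trivially, and reduces to a single M\"obius map in one coordinate; there you run a clean one-variable potential-theoretic estimate (domain monotonicity of harmonic-measure averages of $\log|H_{r,\zeta'}|$, followed by a Poisson-kernel bound) and then iterate. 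You also extract inner-ness directly from the good-integral condition via Fatou's lemma, avoiding the external reference. What your approach buys is a self-contained and quantitatively transparent argument: the comparison constant $M_r\to\frac{1+|\alpha|}{1-|\alpha|}$ is explicit, and the squeeze via the coordinatewise monotonicity of $(\rho_1,\dots,\rho_n)\mapsto\int\log|h(\rho_1\zeta_1,\dots,\rho_n\zeta_n)|\,d\sigma$ replaces the paper's somewhat informal continuity/DCT step. The paper's approach, on the other hand, handles the full automorphism in one pass and makes the geometric picture (that $\phi(r\mathbb{T}^n)$ approaches $\mathbb{T}^n$) more visible.
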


\begin{proof}
First we will show if $g\in H^{\infty}(\mathbb{D}^n)$ is a good inner function and $\phi$ an automorphism of $\mathbb{D}^n$, then $g\circ \phi$ is an inner function.  Then $|g^*(w)|=1$ for almost every $w\in \mathbb{T}^n$ where $g^*$ denotes the boundary values (radial limits) of $g$ on $\mathbb{T}^n$.  By Theorem \ref{thmrud}, we may assume $\phi(z_1,...,z_n)=\left(\sigma_1(z_1),\sigma_2(z_2),...,\sigma_n(z_n)\right)$ where \[\sigma_j(z_j)=\frac{\alpha_j-\lambda_jz_j}{1-\overline{\alpha_j}\lambda_jz_j}\] is an inner function on $\mathbb{D}$ for $\alpha_j\in \mathbb{D}$ and $|\lambda_j|=1$ and $j=1,2,...,n$.  Then an application of \cite[Theorem 1.2.4]{Saw} allows us to write $|(g\circ \phi)^*|=|g^*\circ\phi^*|=1$ almost everywhere on $\mathbb{T}^n$.  Thus $g\circ \phi$ is inner.   

Now let $r<1$ be sufficiently large so that
\[r>\max\{|\alpha_j|:j=1,2,...,n\}.\]
Then the image $\phi(r\mathbb{T}^n)$ can be represented as \[\prod_{j=1}^n \mathbb{T}_{a_j(r),r_j(r)}.\]  Here, \[\mathbb{T}_{a_j(r),r_j(r)}\] is a circle centered at $a_j(r)$ with radius $r_j(r)$ where $a_j(r)\rightarrow 0$ and $r_j(r)\rightarrow 1$ as $r\rightarrow 1^-$.  For some fixed $M>\sup\{|J(\phi)(z)|^{-1}:z\in \overline{\mathbb{D}^n}\}$ we have,   
\begin{align}
&|\int_{\mathbb{T}^n}\log|g\circ\phi(r\zeta)|d\sigma(\zeta)|\\
&=|\int_{(r\mathbb{T}^n)}\log|g\circ\phi(\zeta)|r^{-n}d\sigma(\zeta)|\\
&=|\int_{\phi((r\mathbb{T}^n))}\log|g(w)||J(\phi)(w)|^{-1}r^{-n}d\sigma(w)|\\
&=|\int_{\prod_{j=1}^n \mathbb{T}_{a_j(r),r_j(r)}}\log|g(w)||J(\phi)(w)|^{-1}r^{-n}d\sigma(w)|\\
&\leq\int_{0}^{2\pi}\int_0^{2\pi}...\int_0^{2\pi}|\log|g(a_1(r)+r_1(r)e^{i\theta_1},...,a_n(r)+r_n(r)e^{i\theta_n})|\\
&-\log|g(r_1(r)e^{i\theta_1},...,r_n(r)e^{i\theta_n})||Mr^{-n}d\theta_1...d\theta_n\\
&+ \int_0^{2\pi}...\int_{0}^{2\pi}|\log|g(r_1(r)e^{i\theta_1},...,r_n(r)e^{i\theta_n})|-\log|g(Re^{i\theta_1},...,Re^{i\theta_n})||Mr^{-n} d\theta_1...d\theta_n\\
&+\int_{\zeta\in \mathbb{T}^n}-\log|g(R\zeta)|r^{-n}M d\sigma(\zeta)
\end{align}

Then using a continuity argument and using the Lebesgue dominated convergence theorem, one can make lines 5 and 6 sufficiently small when $r<1$ is sufficiently large and $R>r$ is sufficiently close to $r$.  Furthermore, using the assumption that $g$ is a good inner function, one can make the line 7 integral arbitrarily small if $r<1$ is sufficiently large.  Then the line 8 integral is arbitrarily small for all $r<1$ sufficiently large since $g$ is a good inner function.

Thus we have shown that \[|\int_{\mathbb{T}^n}\log|g\circ\phi(r\zeta)|d\sigma(\zeta)|\rightarrow 0\] as $r\rightarrow 1^-$ for any automorphism of the polydisk $\phi$ and good inner function $g$.

\end{proof}

The following proposition uses a result in \cite{Rud} about the density of inner functions continuous up to $\overline{\mathbb{D}^n}$ in the ball of $H^{\infty}(\mathbb{D}^n)$.

\begin{proposition}\label{denseprop}
Let $\overline{\textit{Ball}}(H^{\infty}(\mathbb{D}^n))$ be the ball of $H^{\infty}(\mathbb{D}^n)$ as defined previously.  Suppose $\lambda:=(\lambda_1,\lambda_2,...,\lambda_n)\in \mathbb{T}^n$.  Then there exists a collection of inner functions $\{G_{j,\lambda}\}_{j\in \mathbb{N}}\subset \overline{Ball}(H^{\infty}(\mathbb{D}^n))$ so that  
\begin{enumerate}
    \item $G_{j,\lambda}\in C(\overline{\mathbb{D}^n})$ for all $j\in \mathbb{N}$.
    \item $\{G_{j,\lambda}\}_{j\in \mathbb{N}}$ is dense in $\overline{Ball}(H^{\infty}(\mathbb{D}^n))$ equipped with the compact-open topology.
    \item $G_{j,\lambda}(\lambda_1,\lambda_2,...,\lambda_n)=1$ for all $j\in \mathbb{N}$.
\end{enumerate}
\end{proposition}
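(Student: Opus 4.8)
The plan is to take Rudin's dense family of inner functions continuous on $\overline{\mathbb{D}^n}$ and, function by function, correct the value at $\lambda$ by multiplying by a small inner function. A preliminary reduction: composing with the rotation automorphism $z\mapsto(\overline{\lambda_1}z_1,\dots,\overline{\lambda_n}z_n)$ of $\mathbb{D}^n$ — which sends $\lambda$ to $(1,\dots,1)$, preserves innerness and continuity up to $\overline{\mathbb{D}^n}$, maps $\overline{\textit{Ball}}(H^{\infty}(\mathbb{D}^n))$ onto itself, and induces a homeomorphism of it for the compact-open topology — lets me assume $\lambda=(1,\dots,1)$. Since $H(\mathbb{D}^n)$ with the compact-open topology is separable and metrizable and, by the cited theorem of Rudin, the inner functions in $C(\overline{\mathbb{D}^n})$ are dense in $\overline{\textit{Ball}}(H^{\infty}(\mathbb{D}^n))$, I would fix a countable family $\{I_k\}_{k\in\mathbb{N}}$ of such inner functions that is dense in $\overline{\textit{Ball}}(H^{\infty}(\mathbb{D}^n))$. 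Each $I_k$ has $|I_k|=1$ a.e. on $\mathbb{T}^n$, and continuity upgrades this to $|I_k|\equiv 1$ on $\mathbb{T}^n$ (a full-measure set is dense); put $\mu_k:=I_k(1,\dots,1)\in\mathbb{T}$.

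The heart of the matter is the following claim: for every $\mu\in\mathbb{T}$, every compact $K\subset\mathbb{D}^n$, and every $\varepsilon>0$ there is an inner function $B\in C(\overline{\mathbb{D}^n})$ with $B(1,\dots,1)=\mu$ and $\|B-1\|_{L^{\infty}(K)}<\varepsilon$. If $\mu=1$, take $B\equiv 1$. If $\mu\neq 1$, write $\mu=-e^{2i\psi}$ with $\psi\in(-\pi/2,\pi/2)$ (possible exactly because $\mu\neq 1$), set $a:=1-\rho e^{i\psi}$, and put $B(z):=-\dfrac{z_1-a}{1-\overline a\,z_1}$. Then $|a|^2=1-2\rho\cos\psi+\rho^2<1$ for all small $\rho>0$, so $a\in\mathbb{D}$; since $|\overline a\,z_1|<1$ whenever $|z_1|\le 1$, the function $B$ extends holomorphically past $\overline{\mathbb{D}^n}$, hence $B\in C(\overline{\mathbb{D}^n})$; $|B|\equiv 1$ on $\mathbb{T}^n$, so $B$ is inner; $B(1,\dots,1)=-\dfrac{1-a}{1-\overline a}=-\dfrac{\rho e^{i\psi}}{\rho e^{-i\psi}}=\mu$; and if $\delta>0$ is chosen with $K\subset\{z:\max_j|z_j|\le 1-\delta\}$, then uniform continuity of $(z_1,a)\mapsto\dfrac{z_1-a}{1-\overline a\,z_1}$ on the compact set $\{|z_1|\le 1-\delta\}\times\overline{\mathbb{D}}$ gives $\dfrac{z_1-a}{1-\overline a\,z_1}\to -1$ uniformly on $\{|z_1|\le 1-\delta\}$ as $\rho\to 0$, whence $\|B-1\|_{L^{\infty}(K)}<\varepsilon$ once $\rho$ is small enough.

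With the claim in hand I would fix the compact exhaustion $K_m:=\{z:\max_j|z_j|\le 1-1/m\}$ of $\mathbb{D}^n$, and for each $(k,m)\in\mathbb{N}^2$ produce $B_{k,m}\in C(\overline{\mathbb{D}^n})$ inner with $B_{k,m}(1,\dots,1)=\overline{\mu_k}$ and $\|B_{k,m}-1\|_{L^{\infty}(K_m)}<1/m$, and set $G_{(k,m)}:=B_{k,m}\,I_k$; re-indexing $\mathbb{N}^2$ by $\mathbb{N}$ produces the required sequence (which, after undoing the reduction, is the desired $\{G_{j,\lambda}\}$). Each $G_{(k,m)}$ is a product of two inner functions in $C(\overline{\mathbb{D}^n})$, hence is itself inner, lies in $C(\overline{\mathbb{D}^n})$, and has sup-norm at most $1$, so it lies in $\overline{\textit{Ball}}(H^{\infty}(\mathbb{D}^n))$; this yields (1), while $G_{(k,m)}(1,\dots,1)=\overline{\mu_k}\mu_k=1$ yields (3). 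For (2), given $h\in\overline{\textit{Ball}}(H^{\infty}(\mathbb{D}^n))$, a compact $K\subset\mathbb{D}^n$, and $\varepsilon>0$, pick $m$ with $K\subset K_m$ and $1/m<\varepsilon/2$, then pick $k$ with $\|I_k-h\|_{L^{\infty}(K_m)}<\varepsilon/2$ (density of $\{I_k\}$); since $\|(B_{k,m}-1)I_k\|_{L^{\infty}(K)}\le\|B_{k,m}-1\|_{L^{\infty}(K_m)}\,\|I_k\|_{L^{\infty}(\mathbb{D}^n)}<1/m$, the triangle inequality gives $\|G_{(k,m)}-h\|_{L^{\infty}(K)}<\varepsilon$.

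I expect the only genuinely nontrivial step to be the construction of $B$: the point is that a single Blaschke factor in the variable $z_1$, with its zero $a$ approaching $1$ along the direction $e^{i\psi}$ and normalized by the constant $-1$, simultaneously realizes an arbitrary prescribed unimodular value at $(1,\dots,1)$ and stays uniformly close to $1$ on any prescribed compact subset of $\mathbb{D}^n$ — with the degenerate value $\mu=1$ split off and handled by $B\equiv 1$. Everything else (separability, the cited density of continuous inner functions, the stability of innerness under products, and the diagonal $(k,m)$-bookkeeping needed to keep the corrected family dense) I expect to be routine.
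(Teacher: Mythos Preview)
Your argument is correct. The overall strategy matches the paper's --- start from Rudin's dense family of continuous inner functions and multiply each by a correcting inner factor that forces the value $1$ at the target point --- but the two proofs diverge in how the correcting factor is produced and how density is recovered. The paper invokes results of Chee and Heins to obtain inner functions $\Psi_j\in C^{\infty}(\overline{\mathbb{D}^n})$ with $\Psi_j(1,\dots,1)=1/A_j(1,\dots,1)$ and $\Psi_j(0)=1-2^{-j}$, and then uses a normal-families/maximum-modulus argument (the condition $\Psi_j(0)\to 1$ forces any subsequential limit to be the constant $1$) to show $A_{j,\lambda}\Psi_{j,\lambda}$ is still dense with a single index $j$. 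You instead build the corrector explicitly as a one-variable Blaschke factor $-\dfrac{z_1-a}{1-\overline a z_1}$ with $a\to 1$ along a chosen ray, which is more elementary and self-contained; the price is a double index $(k,m)$ and a direct $\varepsilon$--estimate in place of the normal-families step. Both routes are clean; yours avoids the external citations, while the paper's keeps the family singly indexed.
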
 

\begin{proof}
By \cite[Theorem 5.5.1]{Rud}, there exists a collection of inner functions $\{A_j\}_{j\in \mathbb{N}}\subset \overline{\textit{Ball}}(H^{\infty}(\mathbb{D}^n))$ that is dense in $\overline{Ball}(H^{\infty}(\mathbb{D}^n))$ equipped with the compact open topology and $A_j\in C(\overline{\mathbb{D}^n})$ for all $j\in \mathbb{N}$.  To construct the sequence $G_{j,\lambda}$ we modify the sequence $A_j$ as follows.  By \cite{Chee}, (see also \cite{Heins}) there exists $\Psi_j\in C^{\infty}(\overline{\mathbb{D}^n})\cap H^{\infty}(\mathbb{D}^n)$ so that for all $j\in \mathbb{N}$, $\Psi_j$ is inner, \[\Psi_j(1,1,...,1)=\frac{1}{A_j(1,1,...,1)},\] and $\Psi_j(0,0,...,0)=1-2^{-j}$.  We set 
\[\Psi_{j,\lambda}(z_1,...,z_n):=\Psi_j(\lambda_1^{-1}z_1,...,\lambda_n^{-1}z_n),\]
\[A_{j,\lambda}(z_1,...,z_n):=A_j(\lambda_1^{-1}z_1,...,\lambda_n^{-1}z_n),\]
and $G_{j,\lambda}:=A_{j,\lambda}\Psi_{j,\lambda}$.  Then for all $j\in \mathbb{N}$, $G_{j,\lambda}(\lambda_1,...,\lambda_n)=1$ and $G_{j,\lambda}\in C(\overline{\mathbb{D}^n})$.  Let $f\in \overline{\textit{Ball}}(H^{\infty}(\mathbb{D}^n)) $.  Then there exists a subsequence $A_{j_k}$ so that $A_{j_k}\rightarrow f(\lambda_1z_1,...,\lambda_nz_n)$ uniformly on compact subsets of $\mathbb{D}^n$ as $k\rightarrow \infty$.  So $A_{j_k,\lambda}\rightarrow f$ uniformly on compact subsets of $\mathbb{D}^n$ as $k\rightarrow \infty$.  Then $\{\Psi_{j_k,\lambda}\}$ is a normal family, so by passing to a subsequence if necessary, we may assume by the maximum modulus principle that $\Psi_{j_k,\lambda}\rightarrow 1$ uniformly on compact subsets of $\mathbb{D}^n$ as $k\rightarrow \infty$.  So $G_{j_k,\lambda}\rightarrow f$ uniformly on compact subsets of $\mathbb{D}^n$ as $k\rightarrow \infty$.  Thus 
$\{G_{j,\lambda}:j\in \mathbb{N}\}$ is dense in $\overline{\textit{Ball}}(H^{\infty}(\mathbb{D}^n))$ equipped with the compact open topology.

\end{proof}

\begin{proposition}\label{prop3}
Let $\phi$ be an automorphism of $\mathbb{D}^n$, and $C_{\phi}:H(\mathbb{D}^n)\rightarrow H(\mathbb{D}^n)$ be its associated composition operator.  Then $C_{\phi}$ restricted to $\overline{\textit{Ball}}(H^{\infty}(\mathbb{D}^n))$ is surjective onto $\overline{\textit{Ball}}(H^{\infty}(\mathbb{D}^n))$. 
\end{proposition}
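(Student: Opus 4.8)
The plan is to produce, for every target function, an explicit preimage built from the inverse of $\phi$. Since $\phi$ is an automorphism of $\mathbb{D}^n$, its set-theoretic inverse $\phi^{-1}$ is a well-defined biholomorphic self-map of $\mathbb{D}^n$, hence itself an automorphism of $\mathbb{D}^n$; this is immediate from the group structure of $\mathrm{Aut}(\mathbb{D}^n)$, and may alternatively be read off the normal form in Theorem \ref{thmrud} (if $\phi$ is given there by the data $(\alpha_j)$, $(\theta_j)$, $p$, then $\phi^{-1}$ has the analogous form with permutation $p^{-1}$).

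Now fix $h\in \overline{\textit{Ball}}(H^{\infty}(\mathbb{D}^n))$ and set $f:=h\circ\phi^{-1}=C_{\phi^{-1}}h$. Then $f\in H(\mathbb{D}^n)$ as a composition of holomorphic maps, and because $\phi^{-1}$ is a bijection of $\mathbb{D}^n$ onto itself, the change of variables $w=\phi^{-1}(z)$ gives
\[
\|f\|_{L^{\infty}(\mathbb{D}^n)}=\sup_{z\in\mathbb{D}^n}|h(\phi^{-1}(z))|=\sup_{w\in\mathbb{D}^n}|h(w)|=\|h\|_{L^{\infty}(\mathbb{D}^n)}\le 1,
\]
so $f\in\overline{\textit{Ball}}(H^{\infty}(\mathbb{D}^n))$. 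Finally,
\[
C_{\phi}f=f\circ\phi=h\circ\phi^{-1}\circ\phi=h,
\]
which exhibits $h$ as an image under $C_{\phi}$ and hence proves surjectivity.

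The same supremum-norm identity with the roles of $\phi$ and $\phi^{-1}$ interchanged shows in addition that $C_{\phi}$ maps $\overline{\textit{Ball}}(H^{\infty}(\mathbb{D}^n))$ into itself, so the statement is meaningful; in fact $C_{\phi}$ restricted to the ball is a bijection with two-sided inverse $C_{\phi^{-1}}$. I do not anticipate any genuine obstacle here: the entire content is the invertibility of $\phi$ together with the fact that composition with a self-bijection of $\mathbb{D}^n$ preserves the supremum norm. The only point deserving a sentence of care is the justification that $\phi^{-1}$ is again an automorphism of the polydisk, which is where Theorem \ref{thmrud} (or the group axioms) enters.
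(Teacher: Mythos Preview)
Your argument is correct and follows exactly the same approach as the paper: given a target $h$ in the ball, take $C_{\phi^{-1}}h$ as the preimage and note $C_{\phi}(C_{\phi^{-1}}h)=h$. The paper's proof is terser---it simply asserts that $C_{\phi^{-1}}f$ lies in the ball without writing out the supremum-norm computation---but your added detail on why $\phi^{-1}$ is again an automorphism and why the sup norm is preserved is sound and does not depart from the intended method.
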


\begin{proof}
Let $f\in \overline{\textit{Ball}}(H^{\infty}(\mathbb{D}^n))$.  Then it is clear that $C_{\phi^{-1}}f\in \overline{\textit{Ball}}(H^{\infty}(\mathbb{D}^n)) $ and $f=C_{\phi}\left(C_{\phi^{-1}}f\right)$.
\end{proof}

\section{Universality Criterion for Semigroups}

We use the semigroup analog of the following theorem in a significant way, as vector space techniques are not applicable to the ball of $H^{\infty}(\mathbb{D}^n)$.  

\begin{theorem}\cite{GethShap}\label{geth}
Suppose $T$ is a continuous linear operator on a separable $F$-space $X$.  Suppose there exists a dense subset $\mathcal{D}$ of $X$ and a right inverse $S$ for $T$ so that $\|T^n x\|\rightarrow 0$ and $\|S^n x\|\rightarrow 0$ for all $x\in \mathcal{D}$, then $X$ has $T$-universal vectors.

\end{theorem}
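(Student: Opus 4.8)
The plan is to deduce the existence of a $T$-universal vector from \emph{topological transitivity} of $T$ together with a Baire-category argument. Since $X$ is a separable $F$-space, it is a separable, completely metrizable topological vector space, hence a separable Baire space; by the Birkhoff transitivity theorem it therefore suffices to show that for every pair of nonempty open sets $U,V\subset X$ there is an $n\in\mathbb{N}$ with $T^n(U)\cap V\neq\emptyset$. Birkhoff's theorem will moreover give that the set of $T$-universal vectors is a dense $G_\delta$ subset of $X$, so in particular nonempty.

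To establish transitivity, I would fix nonempty open $U,V\subset X$ and, using density of $\mathcal{D}$, choose $u\in U\cap\mathcal{D}$ and $v\in V\cap\mathcal{D}$. The key device is the vector $x_n:=u+S^n v$. On one hand, since $v\in\mathcal{D}$ the hypothesis $\|S^n v\|\to 0$ forces $x_n\to u$, so $x_n\in U$ once $n$ is large. On the other hand, because $S$ is a right inverse for $T$ one has $T^n S^n=I$ (iterate $TS=I$), whence
\[
T^n x_n = T^n u + T^n S^n v = T^n u + v ;
\]
since $u\in\mathcal{D}$, the hypothesis $\|T^n u\|\to 0$ forces $T^n x_n\to v$, so $T^n x_n\in V$ once $n$ is large. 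Choosing a single $n$ large enough for both memberships yields $T^n x_n\in T^n(U)\cap V$, proving transitivity, and hence the theorem. Alternatively, one can bypass Birkhoff and directly build a universal vector as a rapidly convergent series $\sum_k S^{m_k} v_k$, where $\{v_k\}$ is dense in $X$ and $m_k\uparrow\infty$ is chosen so fast that the $T^{m_k}$-images are well separated; the transitivity route is cleaner.

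The only points that need care in a careful writeup — and where I expect the exposition to be most delicate — are: that $\|\cdot\|$ denotes a translation-invariant $F$-norm inducing the topology of $X$, so that convergence statements such as $x_n\to u$ and $T^n x_n\to v$ follow from the stated limits using continuity of the iterates $T^n$; that each $T^n$ is continuous (immediate since $T$ is), which is what makes the relevant preimages open in the Birkhoff argument; and the exact reading of "right inverse $S$ for $T$", namely $TS=I$ on the points where it is applied, which is precisely what collapses $T^n S^n v$ to $v$. With those conventions fixed, the remaining details are routine.
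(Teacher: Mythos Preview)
Your argument is correct and is essentially the classical proof of this hypercyclicity criterion: verify topological transitivity via the test vectors $x_n=u+S^n v$ and invoke Birkhoff's theorem. Note, however, that the paper does not supply a proof of this statement at all; Theorem~\ref{geth} is quoted from \cite{GethShap} as background, and it is the semigroup analogue (Theorem~\ref{One}) that the main argument actually uses. So there is nothing in the paper to compare your proof against, but what you have written is exactly the standard route and would be accepted as a proof of the cited result.
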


The following is a consequence of Theorem \ref{geth} as mentioned in \cite[pp. 283]{GethShap}.

\begin{corollary}\cite{GethShap}
Suppose $\mathcal{D}$ is a dense subset of $X$ and $\{T_j\}$ is a sequence of continuous linear operators on $X$ for which $T_j\rightarrow 0$ pointwise on $\mathcal{D}$ suppose for each $j$ the operator $T_j$ has a right inverse $S_j$ and $S_j\rightarrow 0$ pointwise on $\mathcal{D}$.  Then the set $\{T_j x:j\geq 0\}$ is dense in $X$ for a dense $G_{\delta}$ set of vectors $x\in X$.
\end{corollary}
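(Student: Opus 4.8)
The plan is to prove the corollary by a Baire category argument that runs parallel to (and in fact generalizes) the proof of Theorem \ref{geth}; the single--operator statement is recovered by taking $T_j=T^j$ and $S_j=S^j$. Throughout, $X$ is a separable $F$-space as in Theorem \ref{geth}, so its topology comes from a complete translation--invariant metric and the vector space operations are continuous.

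First I would reformulate the conclusion in category terms. Since $X$ is separable and metrizable, fix a countable base $\{V_k\}_{k\geq 1}$ of nonempty open sets. A vector $x\in X$ is universal for $\{T_j\}$ exactly when $\{T_jx:j\geq 0\}$ meets every $V_k$, i.e.
\[
\{x\in X:\{T_jx:j\geq 0\}\text{ is dense in }X\}=\bigcap_{k\geq 1}\ \bigcup_{j\geq 0}\ T_j^{-1}(V_k).
\]
Each $T_j$ is continuous, so $T_j^{-1}(V_k)$ is open; hence each $U_k:=\bigcup_{j\geq 0}T_j^{-1}(V_k)$ is open and the universal set is a $G_\delta$. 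By the Baire category theorem it then suffices to show that every $U_k$ is dense in $X$.

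Next I would prove density of $U_k$ using the hypotheses on $\mathcal D$, $T_j$, and $S_j$. Fix $k$ and an arbitrary nonempty open set $W\subseteq X$; the goal is to produce $x\in W$ and an index $j$ with $T_jx\in V_k$. Using density of $\mathcal D$, choose $u\in \mathcal D\cap W$ and $v\in \mathcal D\cap V_k$, and set $x_j:=u+S_jv$. Since $S_jv\to 0$ and translation is continuous, $x_j\to u\in W$, so $x_j\in W$ for all large $j$. On the other hand, as $S_j$ is a right inverse of $T_j$ we have $T_jS_jv=v$, whence by linearity $T_jx_j=T_ju+v$; and since $T_ju\to 0$ we get $T_jx_j\to v\in V_k$, so $T_jx_j\in V_k$ for all large $j$. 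Choosing $j$ large enough for both conditions gives $x_j\in W\cap T_j^{-1}(V_k)\subseteq W\cap U_k$. As $W$ was arbitrary, $U_k$ is dense.

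Finally, applying the Baire category theorem to the countable family $\{U_k\}$ of dense open sets shows $\bigcap_k U_k$ is a dense $G_\delta$, and by the first step every element of it is a universal vector for $\{T_j\}$, which is the assertion. I expect there to be no substantive obstacle here beyond bookkeeping, the real content being already contained in the proof of Theorem \ref{geth}; the only point needing a moment's care is the interpretation of ``right inverse'', namely that $T_jS_j$ acts as the identity at least on the dense set $\mathcal D$ so that $T_jS_jv=v$ is legitimate for the chosen $v\in\mathcal D$ (and that $S_j$ is defined on the vectors involved). Under the stated hypotheses this is automatic, and continuity of $T_j$, density of $\mathcal D$, and joint continuity of addition in an $F$-space are used precisely as above.
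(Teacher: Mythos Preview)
Your argument is correct and is the standard Baire category proof of the universality criterion. Note, however, that the paper does not supply its own proof of this corollary: it is simply quoted from \cite{GethShap} (with the remark that it follows from Theorem~\ref{geth}), so there is nothing to compare against beyond observing that your proof is exactly the one Gethner and Shapiro give.
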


This semigroup analog appears in \cite{ChanWalm} and is stated here for the convenience of the reader.
\begin{theorem}\cite{ChanWalm}\label{One}
Let $X$ be a separable, metrizable, complete, topological semigroup with identity element $e$.  Suppose $\{T_n:X\rightarrow X\}_{n\in \mathbb{N}}$ is a collection of continuous semigroup homomorphisms.  If there exist dense sets $\mathcal{D}_0\subset X$, $\mathcal{D}_1\subset X$, and a sequence of mappings $\{R_n:\mathcal{D}_1\rightarrow X\}_{n\in \mathbb{N}}$ so that 
\begin{enumerate}
    \item $T_n\mathcal{D}_0\rightarrow e$ as $n\rightarrow \infty$.
    \item $R_n\mathcal{D}_1\rightarrow e$ as $n\rightarrow \infty$.
    \item $T_nR_nf\rightarrow f$ for all $f\in \mathcal{D}_1$.
\end{enumerate}
Then there exists $\{x_j\}\subset \mathcal{D}_0$ so that $x:=\prod_{j}x_j$ is universal for $\{T_n\}$.  That is, the orbit $\{T_n x:n\in \mathbb{N}\}$ is dense in $X$.
\end{theorem}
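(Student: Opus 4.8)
The plan is to prove the theorem by a direct inductive construction of the factors $x_j$, carried out in tandem with a choice of indices $n_1<n_2<\cdots$, so that $T_{n_m}x$ comes within $1/m$ of a prescribed dense sequence; this is the semigroup counterpart of the Baire/diagonal argument behind Theorem~\ref{geth}, with the semigroup multiplication and the identity $e$ playing the roles of ``$+$'' and ``$0$''. Fix a compatible metric $d$ on $X$, and, using that $\mathcal{D}_1$ is dense in the separable metrizable space $X$, fix a sequence $\{f_m\}_{m\ge1}\subset\mathcal{D}_1$ that is dense in $X$ and in which every term is repeated infinitely often. It suffices to produce $x=\prod_j x_j$ with each $x_j\in\mathcal{D}_0$ and a strictly increasing sequence $(n_m)$ with $d(T_{n_m}x,f_m)\to0$: since every element of $X$ is approximated by terms $f_m$ with $m$ arbitrarily large, the orbit $\{T_nx:n\in\mathbb{N}\}$ is then dense.

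First I would run the induction on $m$, assuming $x_1,\dots,x_{m-1}\in\mathcal{D}_0$ and $n_1<\cdots<n_{m-1}$ already chosen. With $f_m$ now fixed, continuity of the multiplication map gives $\rho_m>0$ so small that $d(a,e)<\rho_m$, $d(b,f_m)<\rho_m$, $d(c,e)<\rho_m$ force $d(abc,f_m)<1/m$ (note $e\cdot f_m\cdot e=f_m$); set $\tau_m=\min\{\rho_m,2^{-m}\}/4$. Since each $T_n$ is a homomorphism and $T_nx_i\to e$ by hypothesis~(1), the finite product $\prod_{i<m}T_nx_i\to e$ as $n\to\infty$; combining this with~(2) and~(3), pick $n_m>n_{m-1}$ so large that $\prod_{i<m}T_{n_m}x_i$ and $R_{n_m}f_m$ are within $\tau_m$ of $e$ while $T_{n_m}R_{n_m}f_m$ is within $\tau_m$ of $f_m$. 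Then, using density of $\mathcal{D}_0$ and continuity of $T_{n_m}$, choose $x_m\in\mathcal{D}_0$ so close to $R_{n_m}f_m$ that $d(x_m,e)<2\tau_m$ and $d(T_{n_m}x_m,f_m)<2\tau_m<\rho_m$. Finally—the step that needs foresight—I record a ``tail budget'': at each later stage $l>m$ the factor $x_l$ will additionally be required to be so close to $e$ that right multiplication by $x_l$ moves the current partial products $x_1\cdots x_{l-1}$ and $\prod_{i=m+1}^{l-1}T_{n_m}x_i$ (for every already-chosen index, $m<l$) by less than $2^{-l}$ and $\tau_m2^{m-l}$, respectively. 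These are finitely many conditions, each an instance of continuity at $e$ of one of $T_{n_1},\dots,T_{n_l}$ or of the multiplication, hence satisfiable; and they do not conflict with the demand that $x_l$ approximate $R_{n_l}f_l$ precisely because hypothesis~(2) forces $R_{n_l}f_l$ itself to lie near $e$.

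To conclude, the bounds $d(x_1\cdots x_{l-1},x_1\cdots x_l)<2^{-l}$ make the partial products Cauchy, so completeness of $X$ produces $x=\prod_jx_j$, and continuity of $T_{n_m}$ together with continuity and associativity of the multiplication (and $T_{n_m}(e)=e$) give $T_{n_m}x=\big(\prod_{i<m}T_{n_m}x_i\big)\,T_{n_m}x_m\,Q_m$, where $Q_m=\lim_l\prod_{i=m+1}^{l}T_{n_m}x_i$. The head factor is within $\tau_m<\rho_m$ of $e$ by the choice of $n_m$; the tail factor satisfies $d(Q_m,e)\le\sum_{l>m}\tau_m2^{m-l}=\tau_m<\rho_m$ by the recorded budget; and $d(T_{n_m}x_m,f_m)<\rho_m$. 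Hence $d(T_{n_m}x,f_m)<1/m$, which is exactly the claim, so $x$ is universal and lies in the closure of products of $\mathcal{D}_0$. I expect the main obstacle to be this tail control: $Q_m$ is assembled from the infinitely many factors $x_{m+1},x_{m+2},\dots$ chosen only at later stages, each of which must also meet its own stage's requirements, so one must commit in advance to geometrically summable tolerances for every previously fixed $n_m$ and then check that the resulting list of constraints on each $x_l$ stays consistent—the consistency being exactly what hypothesis~(2) buys. A minor loose end is verifying that the homomorphisms fix $e$, which is automatic for the composition operators $C_\phi$ to which the theorem will be applied.
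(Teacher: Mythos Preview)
The paper does not prove Theorem~\ref{One}; it is quoted from \cite{ChanWalm} and stated only for the reader's convenience, so there is no argument in the paper to compare yours against.

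Your construction is the standard one for such universality criteria and is essentially correct. The only substantive point is the one you already flag: your tail control needs $T_{n_m}(x_l)$ close to $e$ whenever $x_l$ is, and continuity of $T_{n_m}$ alone yields only $T_{n_m}(x_l)$ close to $T_{n_m}(e)$. A semigroup homomorphism on a monoid need not fix the identity (it must merely send $e$ to an idempotent), so for the argument as written one should read the hypothesis as ``monoid homomorphism''; this is surely what is intended in \cite{ChanWalm} and is automatic in the paper's application, since $C_\phi 1=1$. A smaller issue of exposition: when you choose $n_l$, the requirement ``$R_{n_l}f_l$ within $\tau_l$ of $e$'' must in fact be tightened to whatever the accumulated tail-budget constraints from stages $m<l$ demand, so that $x_l$ can simultaneously approximate $R_{n_l}f_l$ and sit close enough to $e$; your closing remarks show you see this, but the order of choices in the middle paragraph obscures it slightly.
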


\section{Main Result and Proof}

Now we will study when an arbitrary sequence of automorphisms of the polydisk has a universal element.

This next theorem is a new result and improves \cite{Chee}, and is valid for the ball of $H^{\infty}(\mathbb{D}^n)$ for $n=1,2,...$.

\begin{theorem}
Let \[\phi_k(z_1,...,z_n):=\left(e^{i\theta^k_1}\frac{\alpha^k_1-z_{p_k(1)}}{1-\overline{\alpha^k_1}z_{p_k(1)}},...,e^{i\theta^k_n}\frac{\alpha^k_n-z_{p_k(n)}}{1-\overline{\alpha^k_n}z_{p_k(n)}}\right)\] be a sequence of automorphisms of the polydisk $\mathbb{D}^n\subset \mathbb{C}^n$.  If $(\alpha_1^k,...,\alpha_n^k)\rightarrow (\alpha_1,...,\alpha_n)\in \mathbb{T}^n$ as $k\rightarrow \infty$ then there exists an inner function $x$ so that $\{x\circ \phi_k:k\in \mathbb{N}\}$ is dense in $\overline{\textit{Ball}}(H^{\infty}(\mathbb{D}^n))$.

Furthermore, the universal element $x$ has the form 
\[x=\prod_{j=1}^{\infty}x_j\] for some sequence of continuous (up to $\overline{\mathbb{D}^n}$) inner functions $\{x_j\}_{j\in \mathbb{N}}$.

\end{theorem}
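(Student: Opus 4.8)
The plan is to verify the hypotheses of the semigroup universality criterion, Theorem \ref{One}, applied to the semigroup $X := \overline{\textit{Ball}}(H^{\infty}(\mathbb{D}^n))$ under pointwise multiplication and the operators $T_k := C_{\phi_k}$. First I would record the routine structural facts: in the compact-open topology $X$ is separable, metrizable and complete, it is a topological semigroup with identity the constant function $1$, and each $C_{\phi_k}$ is a continuous semigroup homomorphism of $X$ into $X$ — the homomorphism property is $(fg)\circ\phi_k=(f\circ\phi_k)(g\circ\phi_k)$, mapping into $X$ follows from Proposition \ref{prop3}, and continuity holds because $\phi_k$ carries compact subsets of $\mathbb{D}^n$ to compact subsets. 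I would also observe at the outset that a universal element for any subsequence of $\{\phi_k\}$ is a fortiori universal for $\{\phi_k\}$, so we may pass to subsequences at will.

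The geometric heart of the argument is that, because each limit $\alpha_j$ lies on $\mathbb{T}$, the disk automorphisms appearing in $\phi_k$ degenerate to constants. Indeed, for $|\alpha|=1$ one has the identity $\frac{\alpha-w}{1-\overline{\alpha}w}\equiv\alpha$, so by continuity in the parameter $\frac{\alpha_j^k-w}{1-\overline{\alpha_j^k}w}\to\alpha_j$ uniformly for $w$ in compact subsets of $\mathbb{D}$ (the denominators stay bounded below on such sets since $|\alpha_j^k|<1$). Passing to a subsequence I may assume $e^{i\theta_j^k}\to e^{i\theta_j}$ for each $j$ and $p_k\equiv p$ (finitely many permutations), whence $\phi_k\to\lambda:=(e^{i\theta_1}\alpha_1,\dots,e^{i\theta_n}\alpha_n)\in\mathbb{T}^n$ uniformly on compact subsets of $\mathbb{D}^n$. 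Since each coordinate map $w\mapsto\frac{\alpha_j^k-w}{1-\overline{\alpha_j^k}w}$ is an involution of $\mathbb{D}$, solving $u=\phi_k(z)$ for $z$ exhibits $\phi_k^{-1}$ as another tuple of such disk automorphisms, so likewise $\phi_k^{-1}\to\mu\in\mathbb{T}^n$ uniformly on compact subsets, where $\mu_i=\alpha_{p^{-1}(i)}$.

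Now I would assemble the data for Theorem \ref{One} by applying Proposition \ref{denseprop} twice: let $\mathcal{D}_0:=\{G_{j,\lambda}\}_j$ and $\mathcal{D}_1:=\{G_{j,\mu}\}_j$, both dense in $X$, with every member inner and continuous on $\overline{\mathbb{D}^n}$, and with $G_{j,\lambda}(\lambda)=1$ and $G_{j,\mu}(\mu)=1$. Define $R_k:=C_{\phi_k^{-1}}$ restricted to $\mathcal{D}_1$, which lands in $X$ since $\phi_k^{-1}$ is an automorphism. Condition (3) is immediate because $T_kR_k=C_{\phi_k}C_{\phi_k^{-1}}=\mathrm{id}$ on $\mathcal{D}_1$. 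For condition (1), $T_kG_{j,\lambda}=G_{j,\lambda}\circ\phi_k\to G_{j,\lambda}(\lambda)=1$ uniformly on compact subsets, since $\phi_k$ maps a given compact set into a shrinking neighborhood of $\lambda$ in $\overline{\mathbb{D}^n}$ and $G_{j,\lambda}$ is uniformly continuous there; condition (2) is identical with $\mu$, $\phi_k^{-1}$, $\mathcal{D}_1$ replacing $\lambda$, $\phi_k$, $\mathcal{D}_0$. Theorem \ref{One} then yields $\{x_j\}\subset\mathcal{D}_0$ with $x:=\prod_j x_j$ universal for $\{T_k\}$ along the subsequence, hence for the entire sequence; and since each $x_j$ is one of the $G_{i,\lambda}$, it is inner and continuous on $\overline{\mathbb{D}^n}$, giving the "furthermore" assertion.

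The step I expect to demand the most care is showing that $x$ is itself an inner function and not merely a ball element. Here I would argue that each finite partial product $\prod_{j\le N}x_j$ is inner and continuous on $\overline{\mathbb{D}^n}$, and in fact a good inner function (continuity forces $|G_{i,\lambda}|\equiv 1$ on $\mathbb{T}^n$, so $\log|\,\cdot\,|$ stays bounded near $\mathbb{T}^n$ and the integral condition follows by dominated convergence), and then invoke the fact — established for such infinite products in \cite{Chee} — that a product of good inner functions converging uniformly on compacta is again inner; the bookkeeping that keeps $\prod_j x_j$ convergent is internal to the construction behind Theorem \ref{One}. The conceptual crux, by contrast, is the forced double use of Proposition \ref{denseprop} at the two distinct boundary limit points $\lambda$ and $\mu$ of the degenerating automorphisms, which is exactly where the hypothesis $(\alpha_1^k,\dots,\alpha_n^k)\to\mathbb{T}^n$ enters.
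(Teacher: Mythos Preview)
Your proposal is correct and follows essentially the same route as the paper: pass to a subsequence so that the rotations and permutation stabilize, observe that $\phi_k$ and $\phi_k^{-1}$ degenerate to constants $\lambda,\mu\in\mathbb{T}^n$ on compacta, take $\mathcal{D}_0=\{G_{j,\lambda}\}$ and $\mathcal{D}_1=\{G_{j,\mu}\}$ from Proposition~\ref{denseprop}, and invoke Theorem~\ref{One} with $R_k=C_{\phi_k^{-1}}$. You supply more detail than the paper on two points the paper leaves implicit---the structural verification that $X$ is a complete metrizable topological semigroup, and the argument (via good inner functions and \cite{Chee}) that the infinite product $x$ is genuinely inner rather than merely a ball element---but the architecture is the same.
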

\begin{proof}
We first note that the theorem is actually stronger than is stated in that we only require a subsequence of $(\alpha_1^k,...,\alpha_n^k)$ to converge to $\mathbb{T}^n$ as $k\rightarrow \infty$.  So, without loss of generality, we may assume $(\alpha_1^k,...,\alpha_n^k)\rightarrow (\alpha_1,...,\alpha_n) \in \mathbb{T}^n$ as $k\rightarrow \infty$.
Then by Proposition \ref{prop3}, $C_{\phi_k}:\overline{\textit{Ball}}(H^{\infty}(\mathbb{D}^n))\rightarrow \overline{\textit{Ball}}(H^{\infty}(\mathbb{D}^n))$ is surjective, so has a right inverse, denoted by $C_{\phi_k}^{-1}$. 
We let $\phi_{k_l}$ be a subsequence of $\phi_k$ with the following properties.
\begin{enumerate}
    \item $(\theta_1^{k_l},...,\theta_n^{k_l})\rightarrow (\theta_1,...,\theta_n)$ as $l\rightarrow \infty$.
    \item $p_{k_l}=p_{k_{l+1}}:=p$ for all $l\in \mathbb{N}$.  
    
\end{enumerate}

We note that \[\phi_{k_l}^{-1}=\left(e^{i\theta^{k_l}_{p^{-1}(1)}}\frac{a^{k_l}_{p^{-1}(1)}-z_{p^{-1}(1)}}{1-\overline{a^{k_l}_{p^{-1}(1)}}z_{p^{-1}(1)}},...,e^{i\theta^{k_l}_{p^{-1}(n)}}\frac{a^{k_l}_{p^{-1}(n)}-z_{p^{-1}(n)}}{1-\overline{a^{k_l}_{p^{-1}(n)}}z_{p^{-1}(n)}}\right).\]
Since $\phi_{k_l}$ and $\phi_{k_l}^{-1}$ are not necessarily equal, we need to define dense (in the ball of $H^{\infty}(\mathbb{D}^n)$) sets $\mathcal{D}_0$ and $\mathcal{D}_1$ so that $C_{\phi_{k_l}}\mathcal{D}_0\rightarrow 1$ and $C_{\phi_{k_l}^{-1}}\mathcal{D}_1\rightarrow 1$ as $l\rightarrow \infty$. Then we can apply the semigroup universality criterion seen in Theorem \ref{One}.

One can show that for any $g\in C(\overline{\mathbb{D}^n})$, $g\circ \phi_{k_l}\rightarrow g(\lambda)$ uniformly on compact subsets of $\mathbb{D}^n$ as $l\rightarrow \infty$ where  \[\lambda:=(e^{i\theta_1}\alpha_1,...,e^{i\theta_n}\alpha_n)=\lim_{l\rightarrow \infty}\left(e^{i\theta_1^{k_l}}\alpha_1^{k_l},...,e^{i\theta_n^{k_l}}\alpha_n^{k_l}\right).\]  Furthermore, $g\circ \phi_{k_l}^{-1} \rightarrow g(\gamma)$ uniformly on compact subsets of $\mathbb{D}^n$ as $l\rightarrow \infty$, where \[\gamma:=(e^{i\theta_{p^{-1}(1)}}\alpha_{p^{-1}(1)},...,e^{i\theta_{p^{-1}(n)}}\alpha_{p^{-1}(n)})=\lim_{l\rightarrow \infty}\left(e^{i\theta_{p^{-1}(1)}^{k_l}}\alpha_{p^{-1}(1)}^{k_l},...,e^{i\theta_{p^{-1}(n)}^{k_l}}\alpha_{p^{-1}(n)}^{k_l}\right).\] 

For $G_{j,\lambda}$ and $G_{j,\gamma}$ as defined in Proposition \ref{denseprop}, we define \[\mathcal{D}_0:=\{G_{j,\lambda}:j\in \mathbb{N}\}\] and \[\mathcal{D}_1:=\{G_{j,\gamma}:j\in \mathbb{N}\}.\]

Then if $C_{\phi_{k_l}}$ is defined as the composition operator on $H(\mathbb{D}^n)$, one can show that 
\[C_{\phi_{k_l}}\mathcal{D}_0\rightarrow 1\] and \[C_{\phi_{k_l}}^{-1}\mathcal{D}_1\rightarrow 1\] uniformly on compact subsets of $\mathbb{D}^n$ as $l\rightarrow \infty$.  By Proposition \ref{denseprop}, $\mathcal{D}_0$ and $\mathcal{D}_1$ are dense in the ball of $H^{\infty}(\mathbb{D}^n)$ equipped with the compact-open topology.  Thus we can conclude by Theorem \ref{One} that there exists a universal inner function $x$ for $C_{\phi_k}$.  Furthermore,
$x=\prod_{j\in \mathbb{N}}x_j$ for some $\{x_j\}\subset \mathcal{D}_0$, implying $x_j\in C(\overline{\mathbb{D}^n})$ for all $j\in \mathbb{N}$.

\end{proof}

\section{Acknowledgements}
I wish to thank Kit Chan for useful discussions and comments on an earlier draft of this paper.  I also wish to thank the anonymous referee for the helpful comments.

\bibliographystyle{amsalpha}
\bibliography{timclos}

\end{document}